\documentclass[11pt]{article}

\usepackage[english]{babel}
\usepackage[T1]{fontenc}
\usepackage[utf8]{inputenc}

\usepackage{amsmath,amssymb,amsthm,amsfonts}

\usepackage{amsthm}
\usepackage{aliascnt}
\theoremstyle{plain}

\newtheorem{theorem}{Theorem}[section]

\newaliascnt{lemma}{theorem}
\newtheorem{lemma}[lemma]{Lemma}
\aliascntresetthe{lemma}

\newaliascnt{proposition}{theorem}
\newtheorem{proposition}[proposition]{Proposition}
\aliascntresetthe{proposition}

\newaliascnt{corollary}{theorem}

\aliascntresetthe{corollary}

\newaliascnt{conjecture}{theorem}
\newtheorem{conjecture}[conjecture]{Conjecture}
\aliascntresetthe{conjecture}

\newaliascnt{property}{theorem}

\aliascntresetthe{property}

\theoremstyle{definition}

\newaliascnt{definition}{theorem}
\newtheorem{definition}[definition]{Definition}
\aliascntresetthe{definition}

\newaliascnt{assumption}{theorem}

\aliascntresetthe{assumption}

\newaliascnt{example}{theorem}
\newtheorem{example}[example]{Example}
\aliascntresetthe{example}

\newaliascnt{exercise}{theorem}

\aliascntresetthe{exercise}

\theoremstyle{remark}

\newaliascnt{remark}{theorem}
\newtheorem{remark}[remark]{Remark}
\aliascntresetthe{remark}

\usepackage{mathtools}
\usepackage{mathrsfs}
\usepackage{centernot}
\usepackage{blkarray}
\usepackage[scr=boondox]{mathalfa}

\usepackage{graphicx}
\usepackage{tikz}
\usetikzlibrary{
    arrows.meta,
    arrows,
    matrix,
    positioning,
    shapes,
    calc
}
\usepackage{forest}
\usepackage{stackengine}
\stackMath

\usepackage{booktabs}
\usepackage{array}

\usepackage{float}
\usepackage[font=footnotesize,labelfont=bf]{caption}
\usepackage{subcaption}
\usepackage{tikz}
\usetikzlibrary{arrows.meta,positioning}

\usepackage[colorlinks=true,
            linkcolor=blue,
            citecolor=blue,
            urlcolor=blue]{hyperref}
\usepackage[capitalise,nameinlink]{cleveref}
\crefname{theorem}{Theorem}{Theorems}
\Crefname{theorem}{Theorem}{Theorems}

\crefname{lemma}{Lemma}{Lemmas}
\Crefname{lemma}{Lemma}{Lemmas}

\crefname{proposition}{Proposition}{Propositions}
\Crefname{proposition}{Proposition}{Propositions}

\crefname{corollary}{Corollary}{Corollaries}
\Crefname{corollary}{Corollary}{Corollaries}

\crefname{conjecture}{Conjecture}{Conjectures}
\Crefname{conjecture}{Conjecture}{Conjectures}

\crefname{property}{Property}{Properties}
\Crefname{property}{Property}{Properties}

\crefname{definition}{Definition}{Definitions}
\Crefname{definition}{Definition}{Definitions}

\crefname{assumption}{Assumption}{Assumptions}
\Crefname{assumption}{Assumption}{Assumptions}

\crefname{example}{Example}{Examples}
\Crefname{example}{Example}{Examples}

\crefname{exercise}{Exercise}{Exercises}
\Crefname{exercise}{Exercise}{Exercises}

\crefname{remark}{Remark}{Remarks}
\Crefname{remark}{Remark}{Remarks}
\usepackage{datetime2}
\usepackage{filecontents}
\usepackage{matlab-prettifier}

\usepackage[
    colorinlistoftodos,
    bordercolor=orange,
    backgroundcolor=orange!20,
    linecolor=orange,
    textsize=small
]{todonotes}

\usepackage{lineno}
\modulolinenumbers[5]
\usepackage{graphicx}

\newcommand{\new}[1]{{\em #1}}

\DeclareMathAlphabet{\mathbbold}{U}{bbold}{m}{n}

\newcommand{\zeror}{-\infty} 
\newcommand{\mv}{m} 

\newcommand{\R}{\mathbb R}

\newcommand{\rmax}{\mathbb{R}_{\max}}

\newcommand{\cycle}{c}

\newcommand{\permutation}{p}

\usepackage{amssymb}

\renewcommand{\geq}{\geqslant}
\renewcommand{\leq}{\leqslant}

\renewcommand{\le}{\leq}
\renewcommand{\ge}{\geq}

\newcommand{\N}{\mathbb{N}}

\DeclareMathOperator{\uval}{ldeg}

\begin{document}

\title{Properties of the Tropical Characteristic Polynomial of Symmetric Matrices}

\author{
Dariush Kiani\thanks{Department of Mathematics and Computer Science, Amirkabir University of Technology (Tehran Polytechnic), Tehran, Iran. Email: \texttt{dkiani@aut.ac.ir}}
\and
Hanieh Tavakolipour\thanks{Department of Mathematics and Computer Science, Amirkabir University of Technology (Tehran Polytechnic), Tehran, Iran. Email: \texttt{h.tavakolipour@aut.ac.ir}}
}

\date{}
\maketitle

\begin{abstract}
We investigate the combinatorial structure of the tropical characteristic polynomial of symmetric matrices using the tropical permanents of their principal submatrices. We establish new inequalities for the leading coefficients of the tropical characteristic polynomial, revealing concavity properties of the coefficient sequence and yielding necessary conditions for a sequence to arise as the coefficient sequence of the tropical characteristic polynomial of a symmetric matrix. These results provide a deeper understanding of the structure of tropical characteristic polynomials associated with symmetric matrices.
\end{abstract}

\noindent\textbf{Keywords:}
Tropical characteristic polynomial, Maximum-weight permutations, tropical linear algebra, Newton polygon, symmetric tropical matrices, weighted digraphs.

\medskip

\noindent\textbf{MSC (2020):}
Primary 15A80; Secondary 15A15, 15A18.

\section{Introduction}
Tropical algebra, $\mathbb{R}_{\max} := (\mathbb{R} \cup \{-\infty\}, \oplus, \otimes)$, is a semiring defined by replacing the classical operations with their tropical counterparts: tropical addition is given by the maximum, while tropical multiplication corresponds to ordinary addition. This framework has found numerous applications across areas such as combinatorics, optimization, and algebraic geometry (see, e.g., \cite{baccelli1992synchronization,butkovivc2010max,viro2001dequantization,itenberg2009tropical,maclagan2015introduction}). More recently, it has also been applied in numerical analysis and numerical linear algebra, particularly in the approximation of roots of polynomials and eigenvalues of matrices over the field of Puiseux series (see \cite{hook2019max,van2018polynomial,hook2018max,hook2017incomplete,noferini2015tropical, akian2025factorization}).

The approximation of eigenvalues via tropical algebra is generally based on the computation of tropical eigenvalues. Once the tropical characteristic polynomial of $A \in \rmax^{n \times n}$
 has been determined, the tropical eigenvalues can be computed in linear time \cite{graham1972efficient}. 
 
 To compute the tropical eigenvalues, it is generally unnecessary to determine all the terms of the tropical characteristic polynomial. To the best of the authors' knowledge, the fastest algorithm for computing the required terms of the tropical characteristic polynomial of an $n \times n$ matrix has complexity $O(n^3)$. This algorithm is based on parametric optimal assignment techniques~\cite{gassner2010fast}. On the other hand, no polynomial-time algorithm is known for computing all the terms of the tropical characteristic polynomial of $A$ in general (see, e.g.,~\cite{butkovivc2007job}).
 
The coefficients of the terms of the tropical characteristic polynomial have a direct combinatorial interpretation: they are closely related to maximum-weight permutations in the weighted directed graph associated with a matrix. The structural properties of a matrix strongly influence these maximum-weight permutations.
In certain structured settings, combinatorial methods significantly reduce computational complexity.
For instance, the computational complexity of computing the tropical eigenvalues can 
be reduced to polynomial time when considering special classes of matrices, 
such as symmetric matrices over $\{0,-\infty\}$, pyramidal matrices, Monge and Hankel matrices, tridiagonal Toeplitz and pentadiagonal Toeplitz matrices (see \cite{butkovivc2007job}, \cite{tavakolipour2020asymptotics}, \cite{tavakolipour2018tropical}).
Also for positive definite matrices defined in the context of symmetrized tropical semiring, the cost of eigenvalue approximation can decrease to linear complexity (see \cite{akian2025spectral}). For further structural insights into tropical positive definite matrices, we refer the reader to \cite{al2024tropicalizing,yu2015tropicalizing}. 

Unlike the structured matrix classes discussed above, symmetric matrices over $\rmax$ need not be sparse, that is, they need not contain many $-\infty$ entries, and generally possess no structural assumptions beyond the symmetry of their entries like positive definite matrices. Consequently, the computation of the tropical characteristic polynomial remains challenging even under the symmetry assumption. The main objective of this paper is therefore to investigate the properties of the tropical characteristic polynomial of symmetric matrices.
 

Let $A$ be a symmetric matrix in $\rmax^{n \times n}$. 
The main objective of this paper is to investigate the structural properties of the sequence $(\delta_m)_{0\leq m\leq n}$, where $\delta_m$ denotes the coefficient of the term of degree $n-m$ in the tropical characteristic polynomial of a symmetric matrix,
\[
P_A(x)=(\delta_0 \otimes  x^{\otimes n})\oplus (\delta_1 \otimes  x^{\otimes n-1}) \oplus \cdots \oplus \delta_n,
\]
where
$
x^{\otimes k}
=
\underbrace{x\otimes\cdots\otimes x}_{k\text{ times}}
$.

We show that the coefficients $\delta_m, m \leq 4$ satisfy some concavity restrictions. These inequalities provide the first structural restrictions on the coefficients of tropical characteristic polynomials of symmetric matrices.

The paper is organized as follows. In \Cref{sec:2}, we begin with a review of the basic principles of tropical polynomials, matrices, graphs, and tropical eigenvalues. In \Cref{sec:3}, we present several characterizations of maximum permutations over symmetric matrices. In \Cref{sec:4}, we establish new inequalities for the initial coefficients of the tropical characteristic polynomial of a symmetric matrix over $\rmax$, yielding necessary conditions on the coefficient sequences of such polynomials. In \Cref{sec:5}, we illustrate the results of \Cref{sec:4} through the truncated Newton polygon of the tropical characteristic polynomial of a symmetric matrix over $\rmax$, thereby providing a geometric interpretation of the obtained inequalities. We conclude in \Cref{sec:6} with directions for future research, including a conjectural extension of our coefficient inequalities to all coefficients of the tropical characteristic polynomial.


  \section{Preliminaries}\label{sec:2}

We work over the tropical (max-plus) semiring
$
\rmax=\mathbb{R}\cup\{-\infty\},
$
equipped with tropical addition and multiplication
\[
a\oplus b=\max\{a,b\},\qquad
a\otimes b=a+b.
\]
The additive identity is $\varepsilon=-\infty$, and the multiplicative identity is $0$. For a positive integer $k$, the tropical power of $a$ is
\[
a^{\otimes k}
=\underbrace{a\otimes\cdots\otimes a}_{k\text{ times}}
=ka,
\]
with the convention that $a^{\otimes 0}=0$.

The operations $\oplus$ and $\otimes$ extend naturally to matrices and vectors. For matrices
$A=(a_{ij})$ and $B=(b_{ij})$ of compatible dimensions,
\[
(A\oplus B)_{ij}=a_{ij}\oplus b_{ij},
\]
and
\[
(A\otimes B)_{ij}
=
\bigoplus_{k}a_{ik}\otimes b_{kj}
=
\max_k(a_{ik}+b_{kj}).
\]

Throughout the paper, $\rmax^{n\times n}$ denotes the set of all $n\times n$ matrices over $\rmax$. We denote by $I$ the tropical identity matrix, whose diagonal entries are $0$ and whose off-diagonal entries are $\varepsilon$. 

For a detailed discussion on the definition of formal polynomials, polynomial functions, their factorization, and tropical roots (including multiplicities over 
$\rmax$), we refer the reader to \cite{akian2025factorization}. This section provides a brief review of the necessary concepts.

A \new{formal polynomial} $P$ over $\rmax$ is a sequence $(P_k)_{k\in\mathbb N}$, where $\N $ is the set of natural numbers (including $0$), with $P_k\in\rmax$ such that $P_k=\zeror$ for all but finitely many $k$.
 The \textit{degree} and \textit{lower degree} of $P$ are respectively defined by
\begin{equation}\label{deg}\deg(P):=\sup\{k \in \N \mid P_k \neq \zeror\},
  \;
  \uval (P) := \inf\{k \in \N\;|\;P_k \neq \zeror\}.
\end{equation}
When $P = \zeror$, we have $\deg(P)= -\infty$ and $\uval(P) = +\infty$.
To any formal polynomial $P$ over $\rmax$, with degree $n$ and lower degree $\mv$,
we associate a \new{polynomial function} 
\begin{equation}\label{widehat_p}\widehat{P}: \rmax \rightarrow \rmax, \;x \mapsto \widehat{P}(x)= \bigoplus_{k=\mv}^{n}P_{k}\otimes x^{\otimes k}=
\max_{m\leq k \leq n}(P_k+kx)\enspace .
\end{equation}

One crucial difference between tropical polynomials and formal polynomials over  $\rmax$ is that we can find polynomials $P$ and $Q$ with different coefficients such that
$\hat P(x)=\hat Q(x)$ for all $x \in \rmax$. 

\begin{definition}[Tropical roots] \label{def_corners}
Given a tropical formal polynomial $P$ over $\rmax$,
 the non-$\zeror$ \new{tropical roots} are the points at which the maximum 
in the \cref{widehat_p} of the associated polynomial function as a supremum of monomial functions,
is attained at least twice (i.e.\ by at least two different monomials). If $P$ has no constant term, then $P$ is said to have a tropical root at $\zeror$.
\end{definition}
The non-zero corners of $P$ are equivalently the points of non-differentiability of the associated polynomial function $\widehat{P}$ restricted to $\R$.

\begin{definition}[Multiplicity of a tropical root]
The \textit{multiplicity} of a tropical root $r\neq \zeror$ of the formal polynomial $P$ is the difference between the largest and the smallest exponent of the monomials of $P$ which attain the maximum at $r$. If $P$ has a tropical root at $\zeror$, the multiplicity of $\zeror$ is the lower degree $\uval{P}$ of $P$, defined in \cref{deg}.
\end{definition}
In ordinary notation, the multiplicity of a corner $r\neq \zeror$ is equivalent to the change of slope of the graph of the associated polynomial function $\widehat{P}$ at $c$.
\begin{definition}[Newton polygon]
Let $P$ be a formal polynomial over $\rmax$, with degree $n$ and lower degree $\mv$,
The \emph{Newton polygon} of $P$ is the upper boundary of the convex hull of the set
\[
\{(k,P_{k})\mid m\le k\le n,\; P_{k}\neq -\infty\}.
\]
\end{definition}

See \Cref{fig_c}, for an example of a Newton polygon. A point $(k,P_{k})$ is called a \emph{saturated index} if it lies on the Newton polygon, and a \emph{non-saturated index} otherwise. It is well known that the indices $0$ and $n$ are always saturated.

The following standard result, which follows from the properties of the Legendre-Fenchel transform~\cite{rockafellar1970convex},
establishes the relationship between tropical eigenvalues and the Newton polygon.
\begin{proposition}[See for instance~\protect{\cite[Prop.~2.4]{akian2016non}}]\label{root_trop}
Let $P$ be a formal polynomial. The tropical roots of $P$ coincide with the opposite of the slopes of the Newton polygon of $P$.
The multiplicity of a root $r$ of $P$ coincides with the horizontal length of the segment  of the Newton polygon
with slope $-r$.
\end{proposition}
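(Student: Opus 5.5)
The plan is to derive the statement from the duality between the polynomial function $\widehat{P}$ and the upper concave hull of the points $(k,P_k)$. Throughout, let $S=\{k \mid P_k\neq\zeror\}$ and $m=\uval(P)$. For $x\in\R$ we have $\widehat{P}(x)=\max_{k\in S}(P_k+kx)$. This is exactly the support function, in direction $(x,1)$, of the point set $\{(k,P_k)\mid k\in S\}$, so it equals the support function of the convex hull of that set. Hence $\widehat{P}$ depends only on the upper boundary of the hull, which is the Newton polygon, denoted $N$. I view $N$ as a concave piecewise-linear function on $[m,n]$ with breakpoints $m=k_0<k_1<\dots<k_s=n$, all at saturated indices. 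Its slopes $\sigma_1>\sigma_2>\dots>\sigma_s$ are strictly decreasing, with $\sigma_j$ the slope on $[k_{j-1},k_j]$.

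The first key step identifies, for each $x\in\R$, the set of indices attaining the maximum in $\widehat{P}(x)$. This set is $\{k\in S \mid P_k+kx=\widehat{P}(x)\}$, i.e.\ the points of $S$ lying on the supporting line of slope $-x$ of the hull. By concavity of $N$, this supporting line touches $N$ in exactly one of two ways. If $-x$ lies strictly between consecutive slopes, or outside $[\sigma_s,\sigma_1]$, it touches only a single vertex $k_j$. If $-x=\sigma_j$ for some $j$, it touches the whole edge $[k_{j-1},k_j]$. In the first case only the single monomial of degree $k_j$ attains the maximum, so $x$ is not a tropical root. In the second case both $k_{j-1}$ and $k_j$ attain it, so $x=-\sigma_j$ is a root. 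This proves that the finite tropical roots are exactly the numbers $-\sigma_j$.

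For the multiplicity, take $r=-\sigma_j$. The monomials attaining the maximum are exactly the points of $S$ lying on the edge $[k_{j-1},k_j]$. Among them the extreme exponents are $k_{j-1}$ and $k_j$ themselves, since the endpoints are vertices in $S$ and every other point on the edge lies between them. So the multiplicity is $k_j-k_{j-1}$, which is the horizontal length of the segment of slope $-r$. The root $\zeror$ is treated separately, directly from the definitions. If $m>0$ there is no constant term, and the multiplicity $m$ is the horizontal gap between $0$ and the start of $N$. With that convention the multiplicities sum to $n$, consistent with the root count.

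The main obstacle is making the support-function argument rigorous: I must show that a maximizer of $P_k+kx$ over $S$ must lie on the upper hull, and exactly on the face selected by the slope $-x$. I would first reduce this to an elementary claim. Take any $k\in S$ with $k_{j-1}\le k\le k_j$. Concavity of $N$ gives $P_k\le N(k)$. In addition, $N(k)+kx\le \widehat{P}(x)$, with equality iff $k$ lies in the face of $N$ exposed by the direction $(x,1)$. Both facts follow from expressing $N(k)$ as a convex combination of $P_{k_{j-1}}$ and $P_{k_j}$ and comparing slopes, in the spirit of the Legendre–Fenchel transform cited in \Cref{root_trop}.
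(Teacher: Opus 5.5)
Your argument is correct and follows essentially the route the paper indicates. The paper gives no proof of its own, only a citation and the remark that the result follows from properties of the Legendre--Fenchel transform; your identification of $\widehat{P}(x)$ with the support function of the hull of $\{(k,P_k)\}$ in direction $(x,1)$, followed by the slope comparison along the vertices $k_0<\dots<k_s$, is that duality made explicit. Your separate convention for the root $\zeror$ (the horizontal gap from $0$ to $\uval(P)$) is also the right reading of the statement, since the Newton polygon as defined in the paper starts at $k=\uval(P)$ and has no segment accounting for that root.
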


 For a positive integer $n$, we denote by $[n] := \{1,\ldots,n\}$. 
A \new{cycle} of length $k$ in $[n]$ is a sequence 
$\cycle_k=(i_{1}i_{2}\ldots i_{k})$ of distinct elements of $[n]$, 
with the convention that $i_{k+1}=i_1$. 
Every permutation of $[n]$ can be decomposed uniquely into disjoint cycles.

 Given $A =(a_{ij}) \in \rmax^{n \times n}$, we denote by $D_A$ the weighted digraph with the node set $[n]$ and arc set $E=\{(i,j)\;:\; a_{ij}>-\infty\}$ and $w(i,j)=a_{ij}$ for all $(i,j)\in E$.
\begin{definition}[Weight of permutations and cycles in $D_A$]
For any permutation $\permutation$ of $[n]$ in $D_A$, the weight of $\permutation$ 
is given by
\[ w(\permutation)=\sum_{i \in[n]}a_{i\permutation(i)}\enspace ,\]
and the weight of a cycle $\cycle_k=(i_{1}i_{2}\ldots  i_{k})$ of length $k$ in $D_A$ is given by
\[w(\cycle_k)=\sum_{\ell\in [k]} a_{i_\ell i_{\ell+1}}\enspace,\]
where $i_{k+1}=i_1$.
\end{definition}

\begin{definition}[Maximum cycle mean]  Given $A\in \rmax^{n \times n}$, the symbol $\lambda(A)$ will stand for the \new{maximum cycle mean} of $A$, that is:
\[\lambda(A)=\max_{c} \frac{w(c)}{l(c)},\]
where the maximization is taken over all cycles in $D_A$. Since the number of cycles in $D_A$ is finite, the maximum is well defined.
If $D_A$ does not have any cycle, then $\lambda(A)=-\infty$. \end{definition}
\begin{lemma}[\protect{\cite{butkovivc2010max}}]\label{max_cycle_mean}
Let $A\in \rmax^{n \times n}$. Then we have
\[\lambda(A)=\max_{k=1, \ldots, n}\frac{\delta_k}{k}.\]

\end{lemma}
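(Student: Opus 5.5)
The plan is to identify $\delta_k$ with the largest total weight of a partial permutation on $k$ nodes of $D_A$, and then compare cycle means with averages over such permutations. Concretely, $\delta_k$, the coefficient of $x^{\otimes (n-k)}$ in $P_A(x)=\operatorname{per}(A\oplus x\otimes I)$, is the maximum over $k$-subsets $S\subseteq[n]$ of the tropical permanent of the principal submatrix $A[S]$. Equivalently, it is the maximum total weight of a family of vertex-disjoint cycles in $D_A$ that together cover exactly $k$ nodes, with $\delta_k=-\infty$ if no such family exists. I would first record this interpretation by expanding the permanent: in each permutation, the fixed points that take the $x$ term contribute the factor $x^{\otimes(n-k)}$, and the remaining nodes form a permutation of $S$ that decomposes into disjoint cycles.

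For the inequality $\max_k \delta_k/k\le\lambda(A)$, take any $k$ with $\delta_k>-\infty$ and a family of disjoint cycles $c_1,\dots,c_r$ achieving $\delta_k$, with $\sum_j l(c_j)=k$. Each cycle satisfies $w(c_j)\le l(c_j)\lambda(A)$. Summing over $j$ gives $\delta_k=\sum_j w(c_j)\le k\,\lambda(A)$, so $\delta_k/k\le\lambda(A)$.

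For the reverse inequality, suppose $D_A$ has a cycle, since otherwise both sides are $-\infty$: no cycle families exist, so every $\delta_k$ with $k\ge1$ is $-\infty$. The maximum in $\lambda(A)$ may be attained by a closed walk that is not elementary. I would note that any cycle whose nodes repeat splits into elementary cycles, and that a mean of such a cycle is a weighted average of their means. So we may assume the maximum is attained by an elementary cycle $c$ of length $k\le n$. That cycle alone is a one-cycle family covering $k$ nodes, so $\delta_k\ge w(c)=k\lambda(A)$. Combining the two inequalities gives the equality.

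The main obstacle is bookkeeping rather than depth. The permanent interpretation of $\delta_k$ has to be justified from the definition of $P_A$, which the excerpt has not spelled out. The cycle convention also needs care: the paper's definition of a cycle uses distinct nodes, so elementary cycles are automatic and the reduction step for non-elementary cycles is then unnecessary. Symmetry of $A$ plays no role in this step.
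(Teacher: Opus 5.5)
Your proof is correct and is the standard argument for this fact. The paper gives no proof of its own; it cites Butkovi\v{c}, and the argument there is this same two-way comparison of cycle means with $\delta_k$, read as the maximal weight of a family of disjoint cycles covering $k$ nodes. The permanent interpretation of $\delta_k$ that you say still needs justifying is already stated in the paper as \Cref{delt_char}, and since the paper defines cycles with distinct nodes, you can drop the reduction step for non-elementary closed walks.
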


 Let $S_n$ denote the set of all permutations of $[n]$. A principal submatrix of
$A=(a_{ij})\in\rmax^{n\times n}$ is obtained by selecting the same subset of rows and columns. For indices
$
1\le i_1<i_2<\cdots<i_k\le n,
$
the corresponding principal submatrix is denoted by
$
A(i_1,i_2,\ldots,i_k)$.
The \new{tropical permanent} of a square matrix
$A=(a_{ij})\in\rmax^{n\times n}$ is defined by
\[
\operatorname{maper}(A)
=
\bigoplus_{\sigma\in S_n}
\bigotimes_{i=1}^{n}
a_{i,\sigma(i)}
=
\bigoplus_{\sigma\in S_n} w(\sigma)
=\max_{\sigma\in S_n}w(\sigma)
=\max_{\sigma\in S_n}\sum_{i=1}^{n}a_{i,\sigma(i)}.
\]
Equivalently, $\operatorname{maper}(A)$ is the optimal value of the classical assignment problem with cost matrix $A$. 
The \new{tropical characteristic polynomial function} of a matrix
$A\in\rmax^{n\times n}$ is defined by
\[
\chi_A(x)
=
\operatorname{maper}(A\oplus x\otimes I)
=
\operatorname{maper}
\begin{pmatrix}
a_{11}\oplus x & a_{12} & \cdots & a_{1n}\\
a_{21} & a_{22}\oplus x & \cdots & a_{2n}\\
\vdots & \vdots & \ddots & \vdots\\
a_{n1} & a_{n2} & \cdots & a_{nn}\oplus x
\end{pmatrix}.
\]
It follows directly from the definition that
\[
\chi_A(x)
=
(\delta_{0}\otimes x^{\otimes n}) \oplus (\delta_{1}\otimes x^{\otimes n-1}) \oplus \cdots \oplus (\delta_{n-1}\otimes  x) \oplus \delta_n\enspace,
\]
where $\delta_0=0$.
In ordinary notation, the tropical characteristic polynomial can be written as
\[
\chi_A(x)
=
\max\{nx+\delta_0,\,(n-1)x+\delta_1,\ldots,x+\delta_{n-1},\,\delta_n\}.
\]
Hence, $\chi_A(x)$ is the upper envelope of finitely many affine functions. Consequently, it is a convex piecewise-linear function.
The coefficients $\delta_k$, $k\in[n]$, are purely combinatorial quantities arising from the tropical permanents of principal submatrices of order $k$. Consequently, in the tropical characteristic  polynomial, the coefficient of the term of degree $n-k$ is indexed by $k$. The following theorem establishes this correspondence precisely.

\begin{theorem}\label{delt_char}
for $k\in [n]$,
\[
\delta_k
=
\max_{B\in\mathcal P_{\,k}(A)}
\operatorname{maper}(B),
\]
where $\mathcal P_{\,k}(A)$ denotes the collection of all principal submatrices of order $k$.
\end{theorem}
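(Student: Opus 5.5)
We prove the statement by expanding the tropical permanent defining $\chi_A(x)$ over permutations and sorting the terms by their number of fixed points that use the diagonal entry $x$. Write $A\oplus x\otimes I=(b_{ij})$, with $b_{ii}=a_{ii}\oplus x=\max(a_{ii},x)$ and $b_{ij}=a_{ij}$ for $i\neq j$. By distributivity of $\otimes$ over $\oplus$ in $\rmax$, each diagonal factor $b_{ii}$ splits into the two monomials $a_{ii}$ and $x$. Hence
\[
\chi_A(x)=\max_{\sigma\in S_n}\ \max_{F\subseteq \mathrm{Fix}(\sigma)}\Bigl(|F|\,x+\sum_{i\notin F}a_{i\sigma(i)}\Bigr),
\]
where $\mathrm{Fix}(\sigma)$ is the fixed-point set of $\sigma$ and $F$ is the set of fixed points at which we pick $x$ rather than $a_{ii}$.

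Next we reparametrize each pair $(\sigma,F)$ by $J=[n]\setminus F$, a set with $|J|=k$, together with the restriction $\tau=\sigma|_J$. Since $F$ consists of fixed points, $\sigma$ maps $J$ onto $J$, so $\tau$ is a permutation of $J$. Conversely, every pair $(J,\tau)$ with $\tau\in S_J$ arises from exactly one $(\sigma,F)$: extend $\tau$ by the identity on $[n]\setminus J$ and take $F=[n]\setminus J$. The term attached to $(J,\tau)$ is $(n-k)x+\sum_{i\in J}a_{i\tau(i)}$, which equals $(n-k)x+w(\tau)$, where $w(\tau)$ is computed in the principal submatrix $A(J)$. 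Note that $\tau$ may itself have fixed points; these then use $a_{ii}$. That is consistent, because the correspondence ranges over all $F\subseteq\mathrm{Fix}(\sigma)$ and not only $F=\mathrm{Fix}(\sigma)$.

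Grouping terms by $k=|J|$ gives
\[
\chi_A(x)=\max_{0\le k\le n}\Bigl((n-k)x+\max_{|J|=k}\ \max_{\tau\in S_J}w(\tau)\Bigr)=\max_{0\le k\le n}\Bigl((n-k)x+\max_{B\in\mathcal P_k(A)}\operatorname{maper}(B)\Bigr).
\]
Here $J=\emptyset$ contributes $nx$, so $\delta_0=0$. This identifies $\delta_k$ as the coefficient of $x^{\otimes (n-k)}$, under the convention that the coefficients of $\chi_A$ are the formal ones produced by this expansion; that is, $\chi_A$ is regarded as the formal polynomial with coefficients $\delta_k$, not merely as a function. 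If some principal permanent equals $-\infty$, the identity still holds, with $\delta_k=-\infty$.

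The only delicate step is the bijection between pairs $(\sigma,F)$ and pairs $(J,\tau)$. We check in particular that no term is lost or double counted when $\tau$ has fixed points. Apart from that, the argument is routine bookkeeping with tropical distributivity.
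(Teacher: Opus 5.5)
Your proof is correct. The paper gives no separate proof: it only says that the expansion of $\chi_A$ follows directly from the definition of $\operatorname{maper}(A\oplus x\otimes I)$, and your distributivity expansion with the bijection $(\sigma,F)\leftrightarrow(J,\tau)$ is the standard argument behind that assertion. You are also right to insist that $\delta_k$ be read as formal coefficients (equivalently, computing the permanent in $\rmax[X]$) rather than read off the function; the paper leaves this implicit, though its later discussion of non-saturated indices depends on it.
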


A term
$
\delta_k\otimes x^{\otimes(n-k)}
$
is called \emph{inessential} if
\[
\delta_k\otimes x^{\otimes n-k}
\le
\bigoplus_{i\neq k}
\delta_i\otimes x^{\otimes n-i},
\]
for every $x\in\mathbb{R}$.
Otherwise, it is called \emph{essential}. Equivalently, if the term
$\delta_k\otimes x^{\otimes n-k}$
is inessential, then
\[
\chi_A(x)
=
\bigoplus_{i\neq k}
\delta_i\otimes x^{\otimes n-i},
\]
for all $x\in\mathbb{R}$.
Therefore, inessential terms do not affect the tropical characteristic polynomial as a function and may be omitted. The inessential terms of the tropical characteristic polynomial coincide with the non-saturated indices of its Newton polygon. 
  
\begin{example}
Let
\[
A =
\begin{pmatrix}
0 & 2 & -\infty & 1 \\
1 & 0 & 3 & -\infty \\
-\infty & 2 & 0 & 4 \\
1 & -\infty & 2 & 0
\end{pmatrix}
\in \mathbb{R}_{\max}^{4\times 4}.
\]
We compute the coefficients of $\chi_A(x)$ step by step using \Cref{delt_char}.

\textbf{$k=0$ (principal submatrices of order $0$).}
By convention,
\[
\delta_0=0.
\]

\textbf{$k=1$ (principal submatrices of order $1$).}
These are the diagonal entries, so
\[
\delta_1=\max\{0,0,0,0\}=0.
\]

\textbf{$k=2$ (principal submatrices of order $2$).}
We consider all $2\times2$ principal submatrices:
\[
\delta_2=\max\{\operatorname{maper}(B):B\in\mathcal{P}_2(A)\}.
\]
The maximum is
$
\delta_2=6
$,
which is attained by the principal submatrix $B=A(3,4)$ with permutation $\sigma=(34)$.

\textbf{$k=3$ (principal submatrices of order $3$).}
We remove one row and the corresponding column and compute the maximum assignment value over the resulting $3\times3$ principal submatrices. The maximum is
$
\delta_3=6
$,
which is attained by the principal submatrix $B=A(2,3,4)$ with permutation $\sigma=(2)(34)$.

\textbf{$k=4$ (principal submatrices of order $4$).}
Since the only principal submatrix of order $4$ is $A$ itself,
$
\delta_4=\operatorname{maper}(A)=10
$
with $\sigma = (1234)$.
\medskip

Hence,
\[
\chi_A(x)
=
(0 \otimes x^{\otimes 4})
\oplus (0 \otimes x^{\otimes 3})
\oplus (6 \otimes x^{\otimes 2})
\oplus 6 \otimes x
\oplus 10.
\]
which is 
\[
\chi_A(x)
=
 x^{\otimes 4}
\oplus  x^{\otimes 3}
\oplus (6 \otimes x^{\otimes 2})
\oplus 6 \otimes x
\oplus 10.
\]
\medskip

\end{example}


 \begin{definition}[Tropical eigenvalue] \label{algebraic}Let $A \in \rmax^{ n \times n}$. The \new{tropical eigenvalues} of $A$, denoted by $\mu_{1}(A)\geq \cdots\geq \mu_{n}(A)$, are the tropical roots of its characteristic polynomial. These can be computed as described in  \Cref{root_trop}.
\end{definition}
\begin{remark}[\cite{butkovivc2010max}]\label{greatest}
Let $A \in \rmax^{ n \times n}$ with tropical eigenvalues $\mu_{1}(A)\geq \cdots\geq \mu_{n}(A)$. Then $\mu_1(A)=\lambda(A)$.
\end{remark}
In \Cref{ex_poly1}, we compute the tropical eigenvalues of a matrix, including their multiplicities, utilizing the Newton polygon method  in  \Cref{root_trop}.

 \begin{example}\label{ex_poly1}
Let us consider the following symmetric matrix of size $7 \times 7$:

\[A =\begin{pmatrix}
 -20 &   -1&   -13&     5&   -13&    -4&     0\\
     -1 &  -10  &  12   & -4   &  3  &  -8   & -6\\
    -13   & 12  & -14 &   17 &   -4  &   2  & -12\\
     5  &  -4   & 17 &  -20 &   12  &  15&     8\\
   -13  &   3  &  -4   & 12  &  -2  &  11  &  -9\\
   -4 &   -8  &   2  &  15   & 11 &    2  & -16\\
     0   & -6 &  -12&     8 &   -9   &-16 &  -10
\end{pmatrix}.\]

  We computed the characteristic polynomial as follows:
  \[P_A(x)=     x^{\otimes7}   \oplus  (2\otimes x^{\otimes6})   \oplus     (34 \otimes x^{\otimes5})   \oplus    (38 \otimes x^{\otimes4})   \oplus    (56\otimes x^{\otimes3})   \oplus     (62\otimes x^{\otimes2})   \oplus     (62\otimes x)   \oplus     62.\]

 Figure \ref{fig_c} displays the Newton polygon of $P_A$. Using the Newton polygon of $P_A$, the lengths of the segments with slopes $0$, $-6$, $-11$, and $-17$ are $2$, $1$, $2$, and $2$, respectively. Consequently, the tropical roots of $P_A$ and so tropical eigenvalues of $A$ are $\mu_1=17$, $\mu_2= 11$, $\mu_3=6$ and $\mu_4=0$ with respective multiplicities $2$, $2$, $1$ and $2$.

This example illustrates how the Newton polygon provides both the tropical eigenvalues and their multiplicities in a geometric way.

\begin{figure}

          \includegraphics[width=10cm]{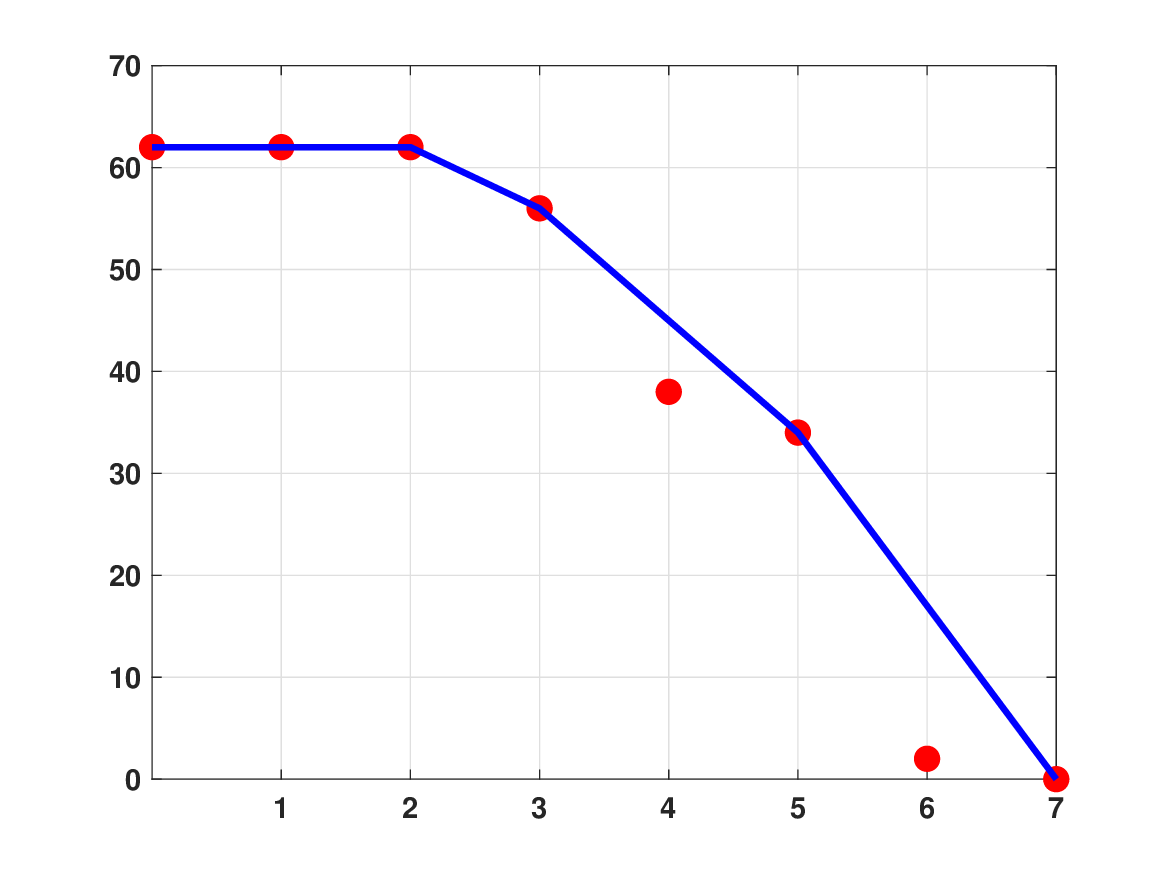}

\caption{Configuration of points of Newton polygon associated with the characteristic polynomial of the matrix $A$  and its tropical eigenvalues in \Cref{ex_poly1}.}\label{fig_c}
\end{figure}

\end{example}

\section{Properties of permutation weights in $D_A$ for symmetric matrices}\label{sec:3}

In this section, we establish properties concerning the weights of cycles in $D_A$ for a symmetric matrix $A$. In particular, we focus on the weights of even-length and odd-length cycles, deriving results that will be used in the next section. In the following lemma we prove that every even cycle is dominated by a collection of disjoint transpositions.

 \begin{lemma}\label{even}
Let $A\in \rmax^{n\times n}$ be a symmetric matrix. If
\[
c_{2k}=(i_1 i_2 \ldots i_{2k})
\]
is a cycle of even length $2k$ in $D_A$, where $k\in\N$ and $k \le \frac{n}{2}$, then there exists a permutation consisting of $k$ disjoint $2$-cycles whose weight is greater than or equal to the weight of $c_{2k}$.
\end{lemma}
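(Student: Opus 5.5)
The plan is to split the arcs of the even cycle into two alternating perfect matchings and use symmetry to turn each matching into a product of transpositions. Write the arcs of $c_{2k}$ as $e_\ell=(i_\ell,i_{\ell+1})$ for $\ell\in[2k]$, with $i_{2k+1}=i_1$. Then
\[
w(c_{2k})=W_{\mathrm{odd}}+W_{\mathrm{even}},\qquad
W_{\mathrm{odd}}=\sum_{j=1}^{k}a_{i_{2j-1}i_{2j}},\quad
W_{\mathrm{even}}=\sum_{j=1}^{k}a_{i_{2j}i_{2j+1}}.
\]

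Define two permutations, each equal to the identity off the vertex set $\{i_1,\ldots,i_{2k}\}$ of the cycle:
\[
\sigma_1=(i_1i_2)(i_3i_4)\cdots(i_{2k-1}i_{2k}),\qquad
\sigma_2=(i_2i_3)(i_4i_5)\cdots(i_{2k}i_1).
\]
Since the $i_\ell$ are distinct, each $\sigma_r$ consists of exactly $k$ disjoint $2$-cycles, and the two together cover every vertex of $c_{2k}$.

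The weight of a $2$-cycle $(ij)$ is $a_{ij}+a_{ji}=2a_{ij}$ by symmetry. Restricted to the vertices of the cycle, this gives
\[
w(\sigma_1)=2W_{\mathrm{odd}},\qquad w(\sigma_2)=2W_{\mathrm{even}},\qquad
w(\sigma_1)+w(\sigma_2)=2\,w(c_{2k}).
\]
All arcs of $c_{2k}$ lie in $D_A$, so every term here is finite. Since $\max(x,y)\ge (x+y)/2$, one of the two satisfies $w(\sigma_r)\ge w(c_{2k})$, and that $\sigma_r$ is the required permutation.

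The main obstacle is bookkeeping about the setting of the comparison. The inequality must compare weights on the same vertex set, meaning the weight of the partial permutation on $\{i_1,\ldots,i_{2k}\}$, or equivalently the full permutation with the same fixed points outside that set. Any fixed-point contribution is common to both sides and cancels, so I will state the comparison for the cycle as a partial permutation. I also need the edge case $k=1$, where $\sigma_1=\sigma_2=(i_1i_2)$ and the statement is trivial. Finally, I will note that symmetry of $A$ guarantees $a_{ji}>-\infty$ whenever $a_{ij}>-\infty$, so the transpositions are genuine cycles in $D_A$.
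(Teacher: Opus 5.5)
Your proposal is correct and follows the paper's proof essentially verbatim. The paper uses the same two alternating matchings $p_{2k}=(i_1i_2)\cdots(i_{2k-1}i_{2k})$ and $\tilde p_{2k}=(i_2i_3)\cdots(i_{2k}i_1)$, uses symmetry to get $w(c_{2k})=\tfrac12\bigl(w(p_{2k})+w(\tilde p_{2k})\bigr)$, and concludes that one of them has weight at least $w(c_{2k})$; your extra remarks (finiteness of reverse arcs by symmetry, comparing partial permutations on the cycle's vertex set, the trivial case $k=1$) are sound bookkeeping that the paper leaves implicit.
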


\begin{proof}
See \Cref{fig:even} for an illustration of the construction. Let $A$ be a symmetric matrix and let
\[
c_{2k}=(i_1 i_2 \ldots i_{2k})
\]
be an even cycle in $D_A$.
By definition,
\begin{equation}\label{c2k}
w(c_{2k})
=
a_{i_1 i_2}+a_{i_2 i_3}+\cdots+a_{i_{2k-1}i_{2k}}+a_{i_{2k}i_1}.
\end{equation}

Consider the permutations
\[
p_{2k}
=
(i_1 i_2)(i_3 i_4)\cdots(i_{2k-1} i_{2k})
\]
and
\[
\tilde p_{2k}
=
(i_2 i_3)(i_4 i_5)\cdots(i_{2k} i_1),
\]
which consist of $k$ disjoint $2$-cycles.

Since $A$ is symmetric, we have
\begin{align}
w(p_{2k})
&=
(a_{i_1 i_2}+a_{i_2 i_1})
+\cdots+
(a_{i_{2k-1}i_{2k}}+a_{i_{2k}i_{2k-1}})
\nonumber\\
&=
2a_{i_1 i_2}
+2a_{i_3 i_4}
+\cdots+
2a_{i_{2k-1}i_{2k}},
\label{p2k}
\end{align}
and similarly,
\begin{align}
w(\tilde p_{2k})
&=
(a_{i_2 i_3}+a_{i_3 i_2})
+\cdots+
(a_{i_{2k}i_1}+a_{i_1 i_{2k}})
\nonumber\\
&=
2a_{i_2 i_3}
+2a_{i_4 i_5}
+\cdots+
2a_{i_{2k}i_1}.
\label{ptilde2k}
\end{align}

Combining \Cref{c2k,p2k,ptilde2k}, we obtain
\[
w(c_{2k})
=
\frac{w(p_{2k})+w(\tilde p_{2k})}{2}.
\]

Hence, at least one of the two numbers
$w(p_{2k})$ and $w(\tilde p_{2k})$
is greater than or equal to $w(c_{2k})$.
Therefore, there exists a permutation consisting of $k$ disjoint $2$-cycles whose weight is greater than or equal to the weight of $c_{2k}$.
\end{proof}

\begin{figure}[H]
\centering

\begin{subfigure}{\textwidth}
\centering
\begin{tikzpicture}[
    ->,
    >=Stealth,
    shorten >=2pt,
    auto,
    node distance=2.5cm,
    thick,
    main node/.style={circle,draw,font=\sffamily\small\bfseries}
]

\node[main node] (1) {1};
\node[main node] (2) [right=of 1] {2};
\node[main node] (3) [right=of 2] {3};
\node[main node] (4) [right=of 3] {4};

\path
(1) edge[bend left] node[below]{\tiny 5} (2)
(2) edge[bend left] node[below]{\tiny 3} (3)
(3) edge[bend left] node[below]{\tiny 9} (4)
(4) edge[bend left] node[below]{\tiny $-2$} (1);

\end{tikzpicture}
\caption{The even cycle $c_{2k}=(1\,2\,3\,4)$ with weight
$w(c_{2k})=5+3+9-2=15$.}
\end{subfigure}

\vspace{0.8cm}

\begin{subfigure}{\textwidth}
\centering
\begin{tikzpicture}[
    ->,
    >=Stealth,
    shorten >=2pt,
    auto,
    node distance=2.5cm,
    thick,
    main node/.style={circle,draw,font=\sffamily\small\bfseries}
]

\node[main node] (1) {1};
\node[main node] (2) [right=of 1] {2};
\node[main node] (3) [right=of 2] {3};
\node[main node] (4) [right=of 3] {4};

\path
(1) edge[bend left] node[below]{\tiny 5} (2)
(2) edge[bend left] node[below]{\tiny 5} (1)
(3) edge[bend left] node[below]{\tiny 9} (4)
(4) edge[bend left] node[below]{\tiny 9} (3);

\end{tikzpicture}
\caption{The permutation $p_{2k}=(12)(34)$ of $\{1,2,3,4\}$ with weight
$w(p_{2k})=28$.}
\end{subfigure}

\vspace{0.8cm}

\begin{subfigure}{\textwidth}
\centering
\begin{tikzpicture}[
    ->,
    >=Stealth,
    shorten >=2pt,
    auto,
    node distance=2.5cm,
    thick,
    main node/.style={circle,draw,font=\sffamily\small\bfseries}
]

\node[main node] (1) {1};
\node[main node] (2) [right=of 1] {2};
\node[main node] (3) [right=of 2] {3};
\node[main node] (4) [right=of 3] {4};

\path
(1) edge[bend left] node[below]{\tiny $-2$} (4)
(2) edge[bend left] node[below]{\tiny 3} (3)
(3) edge[bend left] node[below]{\tiny 3} (2)
(4) edge[bend left] node[below]{\tiny $-2$} (1);

\end{tikzpicture}
\caption{The permutation $\tilde p_{2k}=(14)(23)$ of $\{1,2,3,4\}$ with weight
$w(\tilde p_{2k})=2$.}
\end{subfigure}

\caption{
Illustration of \Cref{even}. Subfigure~(A) shows the even cycle
$c_{2k}=(1\,2\,3\,4)$. Subfigures~(B) and~(C) show the permutations
$p_{2k}=(12)(34)$ and $\tilde p_{2k}=(14)(23)$, respectively.
In this example,
$
w(c_{2k})
=
\frac{w(p_{2k})+w(\tilde p_{2k})}{2}.
$
}
\label{fig:even}
\end{figure}

\begin{remark} By \Cref{even}, if $A\in \rmax^{n\times n}$ is symmetric and $k\in\mathbb{N}$ with $k\le \frac{n}{2}$, then every cycle of length $2k$  in $D_A$ can be replaced by a product of $k$ disjoint $2$-cycles  in $D_A$ of at least the same weight. \end{remark}
 
 \begin{lemma}\label{odd}
Let $A\in \rmax^{n \times n}$ be a symmetric matrix, and let
$
c_k=(i_1 i_2 \ldots i_k)
$
be a cycle of length $k\leq n$ in $D_A$.
Then
\[
2w(c_k)\leq k\delta_2,
\]
where $\delta_2$ denotes the maximum weight of a $2$-cycle in $D_A$.
\end{lemma}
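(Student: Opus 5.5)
The plan is to bound each arc of the cycle separately by $\delta_2$ and then add. For a single arc $(i_\ell,i_{\ell+1})$ with $i_\ell\neq i_{\ell+1}$, symmetry of $A$ gives that the transposition $(i_\ell\, i_{\ell+1})$ has weight
\[
a_{i_\ell i_{\ell+1}}+a_{i_{\ell+1} i_\ell}=2a_{i_\ell i_{\ell+1}} .
\]
This transposition is a permutation of the principal submatrix $A(i_\ell,i_{\ell+1})$ of order $2$. Hence, by \Cref{delt_char},
\[
2a_{i_\ell i_{\ell+1}}\le \operatorname{maper}\bigl(A(i_\ell,i_{\ell+1})\bigr)\le \delta_2 .
\]

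Summing this inequality over the $k$ arcs of $c_k$ gives
\[
2w(c_k)=\sum_{\ell\in[k]}2a_{i_\ell i_{\ell+1}}\le k\delta_2,
\]
which is the claim. If $w(c_k)=-\infty$ there is nothing to prove. Since $c_k$ is assumed to be a cycle in $D_A$, all its arcs are finite, so the sums above are ordinary real sums.

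The only point needing care is the degenerate case $k=1$, a loop $(i_1)$, where the single arc is $a_{i_1i_1}$ and the two endpoints coincide. Here I would use a principal $2\times2$ submatrix $A(i_1,j)$ for some $j\neq i_1$, which requires $n\ge 2$. By \Cref{delt_char}, $\delta_2$ is at least the weight $a_{i_1i_1}+a_{jj}$ of its identity permutation. This alone does not give $2a_{i_1i_1}\le\delta_2$ unless $a_{jj}\ge a_{i_1i_1}$; taking instead $j$ to maximize $a_{jj}$ over $j\neq i_1$ bounds $\delta_2$ only by $\delta_1$ plus the second-largest diagonal entry.

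So the loop case is the main obstacle, and the statement may need either the restriction $k\ge 2$ or an interpretation of ``$2$-cycle'' that includes pairs of fixed points. For $k\ge2$ the argument above is complete. Since the lemma is intended for the odd-cycle analysis with $k\ge3$ in the next section, I would state the proof for $k\ge2$ and add a remark flagging the loop case.
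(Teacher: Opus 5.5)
Your proof is the same as the paper's: each arc of $c_k$ gives a transposition of weight $2a_{i_\ell i_{\ell+1}}\le\delta_2$, and summing over the $k$ arcs yields $2w(c_k)\le k\delta_2$.

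Your worry about $k=1$ is justified, and restricting to $k\ge 2$ is the right fix. The paper's proof silently needs it, and it only applies the lemma to $3$-cycles. For $A=\begin{pmatrix}10&0\\0&0\end{pmatrix}$, the loop at node $1$ gives $2w(c_1)=20$, while $\delta_2\le 10$ even if identity permutations of $2\times 2$ principal submatrices are counted. So the ``pairs of fixed points'' reading does not rescue the loop case either.
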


\begin{proof}
Consider the $k$ cycles of length two
\[
(i_1 i_2),\,
(i_2 i_3),\,
\ldots,\,
(i_{k-1} i_k),\,
(i_k i_1).
\]
Since $\delta_2$ is the maximum weight of a $2$-cycle in $D_A$, we have
\[
w(i_\ell i_{\ell+1})\leq \delta_2
\qquad
\text{for all } \ell\in[k],
\]
where $i_{k+1}=i_1$.
Because $A$ is symmetric,
\[
w(i_\ell i_{\ell+1})
=
a_{i_\ell i_{\ell+1}}
+
a_{i_{\ell+1}i_\ell}
=
2a_{i_\ell i_{\ell+1}}.
\]
Therefore,
\[
\sum_{\ell=1}^k
w(i_\ell i_{\ell+1})
=
2\sum_{\ell=1}^k
a_{i_\ell i_{\ell+1}}
=
2w(c_k).
\]
Hence,
\[
2w(c_k)
=
\sum_{\ell=1}^k
w(i_\ell i_{\ell+1})
\leq
k\delta_2.
\]

See \Cref{fig:odd} for an illustration.
\end{proof}

\begin{figure}[!ht]
\centering

\begin{subfigure}{0.6\textwidth}
\centering
\begin{tikzpicture}[
->,
>=Stealth,
shorten >=2pt,
auto,
node distance=2.5cm,
thick,
main node/.style={circle,draw,font=\sffamily\small\bfseries}
]

\node[main node] (1) {1};
\node[main node] (2) [right=of 1] {2};
\node[main node] (3) [right=of 2] {3};

\path
(1) edge[bend left] node[above]{3} (2)
(2) edge[bend left] node[above]{2} (3)
(3) edge[bend left] node[below]{5} (1);

\end{tikzpicture}
\caption{The cycle $c_3=(1\,2\,3)$.}
\end{subfigure}

\vspace{0.8cm}

\begin{subfigure}{0.3\textwidth}
\centering
\begin{tikzpicture}[
->,
>=Stealth,
shorten >=2pt,
auto,
node distance=2.5cm,
thick,
main node/.style={circle,draw,font=\sffamily\small\bfseries}
]

\node[main node] (1) {1};
\node[main node] (2) [right=of 1] {2};

\path
(1) edge[bend left] node[above]{3} (2)
(2) edge[bend left] node[below]{3} (1);

\end{tikzpicture}
\caption{$(1\,2)$.}
\end{subfigure}
\hfill
\begin{subfigure}{0.3\textwidth}
\centering
\begin{tikzpicture}[
->,
>=Stealth,
shorten >=2pt,
auto,
node distance=2.5cm,
thick,
main node/.style={circle,draw,font=\sffamily\small\bfseries}
]

\node[main node] (2) {2};
\node[main node] (3) [right=of 2] {3};

\path
(2) edge[bend left] node[above]{2} (3)
(3) edge[bend left] node[below]{2} (2);

\end{tikzpicture}
\caption{$(2\,3)$.}
\end{subfigure}
\hfill
\begin{subfigure}{0.3\textwidth}
\centering
\begin{tikzpicture}[
->,
>=Stealth,
shorten >=2pt,
auto,
node distance=2.5cm,
thick,
main node/.style={circle,draw,font=\sffamily\small\bfseries}
]

\node[main node] (3) {3};
\node[main node] (1) [right=of 3] {1};

\path
(3) edge[bend left] node[above]{5} (1)
(1) edge[bend left] node[below]{5} (3);

\end{tikzpicture}
\caption{$(3\,1)$.}
\end{subfigure}

\caption{
Illustration of the proof of \Cref{odd}. Subfigure (A) shows the cycle
$c_3=(1\,2\,3)$.
Subfigures (B)--(D) show the associated $2$-cycles
$(1\,2)$, $(2\,3)$, and $(3\,1)$, respectively.
}
\label{fig:odd}
\end{figure}

\begin{lemma}\label{lem_new}
Let $A=(a_{ij})\in \rmax^{n\times n}$ be a symmetric matrix.
Then, for every $i\in[n]$,
\[
\frac{\delta_i}{i}
\leq
\max\left\{\delta_1,\frac{\delta_2}{2}\right\}.
\]
In particular, at least one of the following inequalities holds:
\[
\frac{\delta_i}{i}\leq \delta_1,
\qquad\text{or}\qquad
\frac{\delta_i}{i}\leq \frac{\delta_2}{2}.
\]
\end{lemma}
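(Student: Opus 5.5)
The plan is to bound $\delta_i$ directly through the cycle decomposition of an optimal permutation. By \Cref{delt_char}, $\delta_i$ is the weight of some permutation $\sigma$ of a subset $S\subseteq[n]$ with $|S|=i$, and $\sigma$ attains $\operatorname{maper}(A(S))$. If $\delta_i=-\infty$ there is nothing to prove, so assume $\delta_i$ is finite. Decompose $\sigma$ into disjoint cycles $c^{(1)},\ldots,c^{(r)}$ of lengths $\ell_1,\ldots,\ell_r$ with $\sum_j \ell_j=i$. Then $\delta_i=\sum_j w(c^{(j)})$.

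Next, bound each cycle by $\ell_j M$, where $M:=\max\{\delta_1,\delta_2/2\}$, splitting into two cases.
\begin{itemize}
\item If $\ell_j=1$, the cycle is a loop $(s)$ with weight $a_{ss}\le\delta_1$, since $\delta_1$ is the maximal diagonal entry by \Cref{delt_char}.
\item If $\ell_j\ge 2$, \Cref{odd} applies directly: $2w(c^{(j)})\le \ell_j\delta_2$, i.e.\ $w(c^{(j)})\le \ell_j\,\delta_2/2$.
\end{itemize}
A $2$-cycle $(s\,t)$ inside $S$ is itself a permutation of the principal submatrix $A(s,t)$, so its weight is at most $\delta_2$. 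This is consistent with how \Cref{odd} interprets $\delta_2$, so invoking that lemma is legitimate. In both cases we get $w(c^{(j)})\le \ell_j M$.

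Summing over the cycles gives
\[
\delta_i=\sum_{j=1}^r w(c^{(j)})\le \sum_{j=1}^r \ell_j M = iM,
\]
and dividing by $i>0$ yields the claim. The "in particular" statement is then immediate, since a quantity bounded by the maximum of two numbers is bounded by one of them.

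The only delicate point, which I expect to be the main (minor) obstacle, is the bookkeeping with $-\infty$. Finiteness of $\delta_i$ forces every arc of $\sigma$ to have finite weight. If some $\ell_j\ge2$, then $\delta_2>-\infty$; if some $\ell_j=1$, then $\delta_1>-\infty$. Hence $M$ is finite whenever it is needed, and the termwise inequalities make sense in $\R\cup\{-\infty\}$ without any indeterminate forms.
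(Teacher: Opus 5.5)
Your proof is correct, but it is organized differently from the paper's. The paper splits according to where the largest entry of $A$ lies. If it is a diagonal entry $a_{kk}$, then $\delta_1=a_{kk}$. If it is an off-diagonal entry $a_{pq}$, then symmetry gives $\delta_2=2a_{pq}$. In either case every permutation of $i$ indices has weight at most $i$ times that entry.

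You avoid locating the maximum and instead bound the optimal permutation cycle by cycle: loops by $\delta_1$, and longer cycles through \Cref{odd}. Both arguments rest on the same entrywise fact, namely $a_{ss}\le\delta_1$, and $a_{st}=\tfrac12 w((s\,t))\le\tfrac12\delta_2$ for $s\neq t$. So your cycle decomposition could even be dropped by bounding each term $a_{s\sigma(s)}$ directly.

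What your route buys is a bound that remembers the cycle type of the optimizer: $\delta_i\le f\,\delta_1+\tfrac{i-f}{2}\,\delta_2$, with $f$ the number of fixed points of $\sigma$ (a term with zero coefficient being omitted). This is the kind of information the paper exploits case by case in \Cref{sec:4}. What the paper's route buys is the identification $\max\{\delta_1,\delta_2/2\}=\max_{s,t}a_{st}$ as an immediate by-product. That is exactly the content of the subsequent theorem on the largest tropical eigenvalue.
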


\begin{proof}
We consider two cases.
\\
\textbf{Case 1:}
\[
\max_{i,j} a_{ij}=a_{kk},
\qquad
k\in[n].
\]
In this case,
$
\delta_1=a_{kk}$. 
Since $a_{kk}$ is the maximal entry of $A$, the mean weight of every cycle is less than or equal to $a_{kk}$. Hence,
$
\frac{\delta_i}{i}\leq \delta_1
$.

\smallskip

\noindent
\textbf{Case 2:}
\[
\max_{i,j} a_{ij}=a_{pq},
\qquad
p,q\in[n],
\quad
p\neq q.
\]
Since $A$ is symmetric, we have
$
a_{pq}=a_{qp}
$
and therefore the $2$-cycle $(pq)$ has weight
$
\delta_2= a_{pq}+a_{qp}=2a_{pq}
$.
Thus,
$
\frac{\delta_2}{2}=a_{pq}
$.
Since $a_{pq}$ is the maximal entry of $A$, the mean weight of every cycle is less than or equal to $a_{pq}$. Hence,
\[
\frac{\delta_i}{i}\leq \frac{\delta_2}{2}.
\]
\end{proof}

\begin{theorem}
Let $A=(a_{ij})\in \rmax^{n\times n}$ be a symmetric matrix. Then the largest tropical eigenvalue of $A$ is given by
$\max_{i,j\in[n]} a_{ij}.
$
\end{theorem}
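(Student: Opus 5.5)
By \Cref{greatest}, the largest tropical eigenvalue is $\mu_1(A)=\lambda(A)$. By \Cref{max_cycle_mean}, $\lambda(A)=\max_{k\in[n]}\delta_k/k$. So it suffices to show
\[
\max_{k\in[n]}\frac{\delta_k}{k}=\max_{i,j\in[n]}a_{ij}.
\]
We may assume $A$ has at least one finite entry. If every entry is $-\infty$, then $D_A$ has no cycles, and both sides equal $-\infty$.

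\textbf{Upper bound.} Let $M=\max_{i,j}a_{ij}$. For any $k$, by \Cref{delt_char} the quantity $\delta_k$ is the weight of some permutation of a $k$-element index set. That weight is a sum of $k$ entries of $A$, so $\delta_k\le kM$. This gives $\max_k \delta_k/k\le M$. One could also use \Cref{lem_new} together with its case analysis, but the direct bound is simpler and does not need symmetry.

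\textbf{Lower bound.} Here symmetry is used, following the case split in \Cref{lem_new}.
\begin{itemize}
\item If $M$ is attained on the diagonal, $M=a_{kk}$, then the $1\times 1$ principal submatrix $A(k)$ gives $\delta_1\ge a_{kk}=M$.
\item If $M=a_{pq}$ with $p\ne q$, then $a_{qp}=a_{pq}$ by symmetry. The principal submatrix $A(p,q)$ with the transposition $(pq)$ gives $\delta_2\ge 2M$, so $\delta_2/2\ge M$. This case needs $n\ge 2$, which holds automatically when $p\ne q$.
\end{itemize}
In both cases $\max_k\delta_k/k\ge M$, and combined with the upper bound, equality follows.

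The only delicate point is the off-diagonal case of the lower bound. Without symmetry, the entry $a_{qp}$ could be much smaller than $a_{pq}$, and no cycle would then have mean equal to $M$. Symmetry is exactly what makes the 2-cycle $(pq)$ have mean $a_{pq}$. Everything else is a direct combination of \Cref{greatest}, \Cref{max_cycle_mean}, and \Cref{delt_char}.
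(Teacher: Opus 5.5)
Your proof is correct and follows essentially the same route as the paper. You reduce to $\max_k \delta_k/k$ via \Cref{greatest} and \Cref{max_cycle_mean}, then use symmetry (a loop, or the $2$-cycle $(pq)$) to get $\max\{\delta_1,\delta_2/2\}\ge \max_{i,j}a_{ij}$; the only cosmetic difference is that you prove $\delta_k\le k\max_{i,j}a_{ij}$ directly for all $k$, correctly noting it needs no symmetry, whereas the paper routes this through \Cref{lem_new}, whose proof rests on the same ``every cycle mean is at most the largest entry'' observation.
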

\begin{proof}
By \Cref{greatest} together with \Cref{max_cycle_mean}, we have
\[
\mu_1(A)=\lambda_{\max}(A)
=
\max_{i\in[n]}\frac{\delta_i}{i}.
\]
So using \Cref{lem_new},
\[
\lambda_{\max}(A)
=
\max\left\{\delta_1,\frac{\delta_2}{2}\right\}.
\]
Since
$
\delta_1=\max_{i\in[n]}a_{ii}
$,
and $A$ is symmetric,
\[
\frac{a_{ij}+a_{ji}}2=a_{ij},
\]
we obtain
\[
\max_{i,j\in[n]}a_{ij}
\le
\max\left\{\delta_1,\frac{\delta_2}{2}\right\}.
\]
On the other hand,
\begin{align*}
\frac{\delta_2}{2}
&=
\max\left\{
\max_{i\neq j}\frac{a_{ij}+a_{ji}}2,
\max_{i\neq j}\frac{a_{ii}+a_{jj}}2
\right\} \\
&=
\max\left\{
\max_{i\neq j}a_{ij},
\max_{i\neq j}\frac{a_{ii}+a_{jj}}2
\right\} \\
&\le
\max_{i,j\in[n]}a_{ij}.
\end{align*}
Hence
\[
\max\left\{\delta_1,\frac{\delta_2}{2}\right\}
=
\max_{i,j\in[n]}a_{ij},
\]
and therefore
\[
\mu_1(A)=\lambda_{\max}(A)
=
\max_{i,j\in[n]}a_{ij}.
\]
\end{proof}

\section{Coefficient inequalities for the cases $m=3$ and $m=4$}\label{sec:4}

In this section, assuming that $A\in \rmax^{n\times n}$ is symmetric, we prove that for $m=3$ and $m=4$, at least one of the following inequalities holds, revealing a concavity property among the leading terms of the sequence $(\delta_m)$:
\begin{enumerate}
    \item $\delta_m+\delta_{m-2}\le 2\delta_{m-1}$;
    \item $\delta_m+2\delta_{m-3}\le 3\delta_{m-2}$.
\end{enumerate}


\begin{theorem}\label{prop1}
Let $A$ be an $n\times n$ symmetric matrix. Then at least one of the following inequalities holds:
\begin{enumerate}
\item
\[
\delta_3+\delta_1\le 2\delta_2,
\]
\item
\[
\delta_3+2\delta_0\le 3\delta_1.
\]
\end{enumerate}
\end{theorem}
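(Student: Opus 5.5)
We argue by cases on the cycle type of a permutation attaining $\delta_3$. By \Cref{delt_char}, $\delta_3=\operatorname{maper}(B)$ for some principal submatrix $B=A(i,j,k)$, so $\delta_3=w(\sigma)$ for a permutation $\sigma$ of $\{i,j,k\}$. If $\delta_3=-\infty$, inequality (2) holds trivially, since $\delta_0=0$. Otherwise $\sigma$ has one of three cycle types: three fixed points, one fixed point and a transposition, or a $3$-cycle. In each case we show that (1) or (2) holds, using only $\delta_0=0$, $\delta_1=\max_\ell a_{\ell\ell}$, the definition of $\delta_2$ as the maximal weight over $2\times 2$ principal submatrices, and \Cref{odd}.

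\textbf{Three fixed points.} Here $\delta_3=a_{ii}+a_{jj}+a_{kk}\le 3\delta_1$, so (2) holds.

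\textbf{Fixed point plus transposition.} Write $\sigma=(i)(jk)$, so $\delta_3=a_{ii}+2a_{jk}$ by symmetry. The transposition $(jk)$ gives $\delta_2\ge 2a_{jk}$. Let $\delta_1=a_{\ell\ell}$.
\begin{itemize}
\item If $\ell\neq i$, then the principal submatrix $A(i,\ell)$ gives $\delta_2\ge a_{ii}+a_{\ell\ell}$. Hence $\delta_3+\delta_1=(a_{ii}+a_{\ell\ell})+2a_{jk}\le 2\delta_2$, which is (1).
\item If $\ell=i$ and $a_{jk}\le a_{ii}$, then $\delta_3\le 3a_{ii}=3\delta_1$, which is (2).
\item If $\ell=i$ and $a_{jk}>a_{ii}$, then $\delta_3+\delta_1=2a_{ii}+2a_{jk}<4a_{jk}\le 2\delta_2$, which is (1).
\end{itemize}

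\textbf{A $3$-cycle.} \Cref{odd} gives $2\delta_3\le 3\delta_2$.
\begin{itemize}
\item If $\delta_1\le \delta_2/2$, then $\delta_3+\delta_1\le \tfrac32\delta_2+\tfrac12\delta_2=2\delta_2$, which is (1).
\item Otherwise $\delta_1>\delta_2/2$, so $\delta_3\le \tfrac32\delta_2<3\delta_1$, which is (2).
\end{itemize}

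The step needing the most care is the mixed case with $\ell=i$, where the maximal diagonal entry is the fixed point itself. There no second diagonal entry is available to bound $a_{ii}+\delta_1$ by $\delta_2$, so one must compare $a_{jk}$ with $a_{ii}$ directly. Throughout, $-\infty$ entries need a brief check. If $\delta_3$ is finite, then every entry used in $\sigma$ is finite and all the comparisons above remain valid in $\rmax$. In the subcase $\ell=i$, $a_{jk}>a_{ii}$, finiteness of $\delta_3$ gives $a_{ii}>-\infty$, so the strict inequality is between finite numbers.
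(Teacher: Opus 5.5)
Your proof is correct, but it takes a different and longer route than the paper.

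\textbf{The paper's route.} It argues by contraposition in two lines. If (2) fails, then $\delta_1<\delta_3/3$. \Cref{lem_new} gives $\delta_3/3\le\max\{\delta_1,\delta_2/2\}$, because for symmetric $A$ this maximum is the largest entry of $A$ and $\delta_3$ is a sum of three entries. Hence $\delta_3/3\le\delta_2/2$, and so $\delta_3+\delta_1<\tfrac43\delta_3\le 2\delta_2$.

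\textbf{Your route.} You split on the cycle type of a permutation attaining $\delta_3$ and rebuild the needed bound case by case, using only the definitions and \Cref{odd}. The dichotomy you use in your $3$-cycle case ($\delta_1\le\delta_2/2$ or not) would settle all three cases at once, given the uniform bound $\delta_3\le 3\max\{\delta_1,\delta_2/2\}$. That bound holds regardless of cycle type, so your fixed-point and transposition subcases are essentially re-deriving it by hand.

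\textbf{What each buys.} Your version is self-contained, with no appeal to the max-entry lemma. It also shows which of the two inequalities holds for each optimal cycle structure. It is the same style of argument the paper needs for $m=4$ (\Cref{prop}), where \Cref{lem_new} alone no longer suffices, so it may be the more adaptable template toward \Cref{conj_future}. The paper's version is much shorter. It also makes clear that the $m=3$ statement is just a consequence of $\max\{\delta_1,\delta_2/2\}$ being the largest entry of $A$. Your treatment of $-\infty$ entries is fine.
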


\begin{proof}
Assume that the second inequality does not hold. Since $\delta_0=0$, this implies
\begin{equation}\label{size3_eq}
\delta_1<\frac{\delta_3}{3}.
\end{equation}

By \Cref{lem_new}, we have
\[
\frac{\delta_3}{3}\le \frac{\delta_2}{2}.
\]
Combining this with \cref{size3_eq}, we obtain
\[
\delta_3+\delta_1
<\delta_3+\frac{\delta_3}{3}
=\frac{4\delta_3}{3}
\le 2\delta_2.
\]
Hence the first inequality holds.
\end{proof}

\begin{theorem}\label{prop}
Let $A$ be an $n\times n$ symmetric matrix. Then at least one of the following inequalities holds:
\begin{enumerate}
\item
\[
\delta_4+\delta_2\le 2\delta_3,
\]
\item
\[
\delta_4+2\delta_1\le 3\delta_2.
\]
\end{enumerate}
\end{theorem}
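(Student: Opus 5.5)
The plan is to argue by contradiction from the second inequality, in the same spirit as the proof of \Cref{prop1}. So assume
\[
\delta_4+2\delta_1>3\delta_2 .
\]
Let $\sigma$ be a permutation of a $4$-element index set $S=\{a,b,c,d\}$ attaining $\delta_4$, as in \Cref{delt_char}. By \Cref{even}, a $4$-cycle can be replaced by two disjoint $2$-cycles of at least the same weight. Hence we may assume $\sigma$ has one of four cycle types:
\begin{itemize}
\item (i) two transpositions $(ab)(cd)$;
\item (ii) one transposition and two fixed points, $(ab)(c)(d)$;
\item (iii) a $3$-cycle and a fixed point, $(abc)(d)$;
\item (iv) four fixed points.
\end{itemize}
In each case the goal is to produce two sub-permutations on $3$-element subsets of $S$ whose total weight is at least $\delta_4+\delta_2$. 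These come from deleting one index and, where necessary, repairing a broken $2$-cycle by a diagonal entry. The first inequality $\delta_4+\delta_2\le 2\delta_3$ then follows, since each $3$-element sub-permutation has weight at most $\delta_3$.

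Two auxiliary facts will be used throughout.
\begin{itemize}
\item The negated inequality combined with $\delta_2\ge a_{ii}+a_{jj}$ and $\delta_2\ge 2a_{ij}$ forces $\delta_1$ to be large relative to $\delta_2$. In particular, by \Cref{lem_new} and the theorem on $\mu_1$, the maximal entry of $A$ should then be a diagonal entry $a_{kk}=\delta_1$; I will check this in each case.
\item Every entry of $A$ is bounded by $\max\{\delta_1,\delta_2/2\}$.
\end{itemize}

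\textbf{Case (iii).} Here \Cref{odd} gives $2w(abc)\le 3\delta_2$, so $\delta_4\le \tfrac32\delta_2+a_{dd}$. Inserting this into the negated inequality yields $a_{dd}+2\delta_1>\tfrac32\delta_2$. This shows the diagonal dominates, so that $2\delta_3\ge$ (weight of $(abc)$) $+\,2\max$ diagonal-type terms should exceed $\delta_4+\delta_2$.

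\textbf{Case (ii).} Write $\delta_4=w(ab)+a_{cc}+a_{dd}$. Then
\[
\delta_3\ge w(ab)+\max(a_{cc},a_{dd}) \quad\text{and}\quad \delta_3\ge \max(a_{aa},a_{bb})+a_{cc}+a_{dd}.
\]
The sum of these is $\delta_4+a_{aa}+\max(a_{cc},a_{dd})$ or similar, and I want to compare it with $\delta_2$ using the negated inequality.

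\textbf{Cases (i) and (iv)} are handled the same way. Delete an index, keep the remaining transposition or the three largest diagonal entries, and compare the sum of the two resulting lower bounds for $\delta_3$ with $\delta_4+\delta_2$.

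The main obstacle is that $\delta_2$ may be attained on a pair of indices outside $S$, or on a pair straddling the structure of $\sigma$. In that case, lower bounds for $\delta_3$ built only from $S$ do not directly dominate $\delta_4+\delta_2$. To overcome this, I would first try to use the pair $\{p,q\}$ attaining $\delta_2$ together with an index of $S$ carrying a large diagonal entry, giving $\delta_3\ge \delta_2+a_{kk}$ whenever $k\notin\{p,q\}$. I would pair this with a bound $\delta_3\ge \delta_4-a_{kk}$ obtained by deleting $k$ from $\sigma$ when $k$ is a fixed point, or $\delta_3\ge\delta_4-\min a$ otherwise. Summing gives $2\delta_3\ge\delta_4+\delta_2$. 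The remaining subcases, where $k\in\{p,q\}$ or $k$ is not a fixed point of $\sigma$, would be closed using the negated inequality $3\delta_2<\delta_4+2\delta_1$ to bound the defect.
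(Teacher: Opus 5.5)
Your proposal has a genuine gap: none of the four cases is actually closed. Case (ii) stops at ``I want to compare'' and Case (iii) at ``should exceed''.

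Your general plan, two permutations of 3-element subsets of $S$ with total weight at least $\delta_4+\delta_2$, is false in general. Take $n=5$, $a_{12}=a_{21}=a_{34}=a_{43}=0$, $a_{55}=1$, and all other entries $-\infty$. Then $(\delta_1,\delta_2,\delta_3,\delta_4)=(1,0,1,0)$, the negated inequality holds, and $\sigma=(12)(34)$. Yet every permutation of a 3-subset of $S=\{1,2,3,4\}$ has weight $-\infty$.

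Your repair is valid as far as it goes: take a fixed point $k$ of $\sigma$ outside a pair $\{p,q\}$ attaining $\delta_2$, so that $\delta_3\ge\delta_2+a_{kk}$ and $\delta_3\ge\delta_4-a_{kk}$. This settles type (iv), and types (ii), (iii) whenever some fixed point lies outside $\{p,q\}$. It cannot handle type (i), which has no fixed points (as in the example). Nor does it cover (ii) with $\{c,d\}=\{p,q\}$, or (iii) with $d\in\{p,q\}$.

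The variant for non-fixed $k$ fails. Deleting $k$ from a transposition $(kj)$ of $\sigma$ gives $\delta_3\ge\delta_4-2a_{kj}+a_{jj}$. Pairing this with $\delta_3\ge\delta_2+a_{kk}$ requires $a_{kk}+a_{jj}\ge 2a_{kj}$, which fails in general. So the subcases you defer to ``bound the defect'' are exactly where the proof has to happen.

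The missing idea is to use a node $k$ with $a_{kk}=\delta_1$, wherever it lies in $[n]$, together with the negated inequality in the quantitative form $2\delta_1>3\delta_2-\delta_4$.

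For types (i), (ii), (iv), write $\sigma$ as the product of permutations $\pi_1,\pi_2$ of two disjoint 2-element sets, labelled so that $k$ is not among the two nodes of $\pi_1$. Then $\delta_3\ge\delta_1+w(\pi_1)$ and $w(\pi_2)\le\delta_2$. Hence $2\delta_3\ge 2\delta_1+2w(\pi_1)>3\delta_2-\delta_4+2w(\pi_1)=\delta_4+\delta_2+2(\delta_2-w(\pi_2))\ge\delta_4+\delta_2$.

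For type (iii), split on $a_{dd}$:
\begin{itemize}
\item If $a_{dd}=\delta_1$, the three permutations $(d)(ab)$, $(d)(bc)$, $(d)(ca)$ give $3\delta_3\ge 3\delta_1+2w(abc)$, using the identity behind Lemma~\ref{odd}. Hence $\delta_4+\delta_2<\tfrac13(4\delta_4+2\delta_1)=\tfrac23(3\delta_1+2w(abc))\le 2\delta_3$.
\item If $a_{dd}<\delta_1$, then $k\neq d$, so $\delta_1+a_{dd}\le\delta_2$. The negated inequality then forces $w(abc)>\delta_2+a_{dd}$, hence $\delta_4+\delta_2<2w(abc)\le 2\delta_3$.
\end{itemize}

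This is the paper's three-case argument. Your remark that the maximal entry is diagonal under the negated hypothesis is true, but it is never turned into an estimate of this kind.
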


\begin{proof}
We distinguish cases according to the cycle decomposition of a permutation attaining $\delta_4$.

By \Cref{even}, every even cycle can be decomposed into disjoint $2$-cycles of at least the same weight. Hence the only possible cycle decompositions for a permutation attaining $\delta_4$ are:
\begin{enumerate}
\item two disjoint $2$-cycles;
\item one $3$-cycle together with a loop.
\end{enumerate}

Throughout the proof, we use the notation introduced in \Cref{notations}. The logical structure of the argument is summarized in \Cref{sketch}.

\begin{table}[H]
\begin{tabular}{ll}
Notation& Definition   \\
\hline
 $p_k^{(q)}$& Permutation of a subset of $[k]$ attaining $\delta_q$     \\
$\tilde{p}_k^{(q)}$& Permutation of a subset of $[k]$ attaining $\delta_q$     \\
$c_k^{(q)}$& Cycle of length $k$ appearing in a permutation attaining $\delta_q$    \\
$N(\delta_q)$& Node set corresponding to $\delta_q$\\
$N(p_k^{(q)})$& Node set corresponding to $p_k^{(q)}$\\
$N(c_k^{(q)})$& Node set corresponding to $c_k^{(q)}$\\
$w(p_k^{(q)})$& Weight of $p_k^{(q)}$\\
$w(c_k^{(q)})$& Weight of $c_k^{(q)}$\\
\hline
\end{tabular}\caption{Used notations  in the proof of \Cref{prop}}\label{notations}
\end{table}
\begin{figure}[!h]
\begin{center}
\scriptsize
\begin{forest}
  for tree={
    align=center,
    parent anchor=south,
    child anchor=north,
    font=\sffamily,
    edge={thick, -{Stealth[]}},
    l sep+=10pt,
    edge path={
      \noexpand\path [draw, \forestoption{edge}] (!u.parent anchor) -- +(0,-10pt) -| (.child anchor)\forestoption{edge label};
    },
    if level=0{
      inner xsep=0pt, tikz={\draw [thick] (.south east) -- (.south west);}
    }{}
  }
  [
  $\delta_4$
    [$\delta_{4}\mbox{~=~}w(p_2^{(4)})\mbox{~+~}w(\tilde{p}_2^{(4)})$[\boxed{\mbox{Part 1}}]][$\delta_{4}\mbox{~=~}w(p_1^{(4)})\mbox{~+~}w(c_3^{(4)})$[$w(p_1^{(4)})\neq\delta_1$
    [\boxed{\mbox{Part 2}}]]
    [$w(p_1^{(4)})\mbox{~=~}\delta_1$[\boxed{\mbox{Part 3}}]]
  ]]]]
\end{forest}
\end{center}
\caption{Illustration of different parts of the proof of \Cref{prop}}
\label{sketch}
\end{figure}

\textbf{Case 1.}
Assume that
\[
\delta_4=w(p_2^{(4)})+w(\tilde p_2^{(4)}).
\]
Assume that the second inequality fails to hold, so we have 
\begin{equation}\label{eq8} 3\delta_2<\delta_4+2\delta_1 \enspace.
\end{equation}
Without loss of generality, assume that $N(\delta_1)\cap N(p_2^{(4)})=\emptyset$. Therefore, we have 
\begin{equation}\label{eq_6}
\delta_1+w(p_2^{(4)}) \leq \delta_3.
\end{equation}
Also we have $2w(\tilde{p}_2^{(4)})\leq 2\delta_2$. So 
\[w(\tilde{p}_2^{(4)}) + \delta_2 \leq 3\delta_2 - w(\tilde{p}_2^{(4)})\]
So, after adding $w(p_2^{(4)})$ to both sides we have:
\begin{equation}\label{eq7}
w(p_2^{(4)})+w(\tilde{p}_2^{(4)}) + \delta_2 \leq w(p_2^{(4)})+3\delta_2 - w(\tilde{p}_2^{(4)})\enspace,
\end{equation}
where the left-hand side of \cref{eq7} is equal to $\delta_4+\delta_2$ and the right hand side equals $2w(p_2^{(4)})+3\delta_2-\delta_4$ since
\[w(p_2^{(4)})+3\delta_2 - w(\tilde{p}_2^{(4)})=2w(p_2^{(4)})+3\delta_2-\delta_4\enspace.\]
Therefore, by \cref{eq7} we have 
\begin{equation}
\delta_4+\delta_2 \leq 2w(p_2^{(4)})+3\delta_2-\delta_4\enspace.
\end{equation}
Therefore, 
\begin{eqnarray}
\delta_4+\delta_2 &\leq&3\delta_2 -\delta_4 +2w(p_2^{(4)})\nonumber\\
&<&(2\delta_1+\delta_4)-\delta_4+2w(p_2^{(4)})\label{eq9}\\
&=&2(\delta_1+w(p_2^{(4)}))\nonumber\\
&\leq&2\delta_3\enspace,\label{eq10}
\end{eqnarray}
where the second and the last inequalities are by \cref{eq8} and \cref{eq_6}, respectively.
\\
\textbf{Case 2.}
Assume that
\[
\delta_4=w(p_1^{(4)})+w(c_3^{(4)})
\]
with $w(p_1^{(4)})\neq\delta_1$.
So we have 
\begin{equation}\label{eq2}
\delta_1+w(p_1^{(4)}) \leq \delta_2\enspace.
\end{equation}
Assume that the first inequality fails to hold, i.e.,
\[
\delta_4+\delta_2>2\delta_3.
\]
We show that this implies the second inequality. So we have
\[w(p_1^{(4)})+w(c_3^{(4)}) + \delta_2 > 2 \delta_3\]
\[ w(p_1^{(4)})+ \delta_2>2 \delta_3-w(c_3^{(4)})=\delta_3+(\delta_3-w(c_3^{(4)}))\geq \delta_3\geq w(c_3^{(4)})\]
Therefore, we have
\begin{equation}\label{eq3}
w(p_1^{(4)})+ \delta_2>w(c_3^{(4)})
\end{equation}
So,
\begin{eqnarray}
\delta_4+2\delta_1&=&w(p_1^{(4)})+w(c_3^{(4)})+2\delta_1\nonumber\\
&\leq&w(p_1^{(4)})+w(p_1^{(4)})+ \delta_2+2\delta_1\label{eq4}\\
&=& \delta_2 +2(w(p_1^{(4)})+\delta_1)\nonumber\\
&\leq&\delta_2 +2\delta_2 =3\delta_2\label{eq1}\enspace,
\end{eqnarray}

where \ref{eq4} and \ref{eq1} are by \ref{eq3} and \ref{eq2}, respectively.\\

\textbf{Case 3.}
Assume that
\[
\delta_4=w(p_1^{(4)})+w(c_3^{(4)})
\]
with $w(p_1^{(4)})=\delta_1$.

Assume that the second inequality fails, i.e.,
\[
\delta_2 < \frac{1}{3}(\delta_4+2\delta_1).
\]
We show that then the first inequality must hold.
We have
\begin{eqnarray}
\delta_4+\delta_2&<& \frac{1}{3}(4\delta_4+2\delta_1) \nonumber\\
&=&\frac{1}{3}(4w(p_1^{(4)})+4w(c_3^{(4)})+2\delta_1)\nonumber\\
&=&\frac{1}{3}(6\delta_1+4w(c_3^{(4)}))\nonumber\\
&=&\frac{2}{3}(3\delta_1+2w(c_3^{(4)}))\nonumber\\
&\leq&\frac{2}{3}(3\delta_3)=2\delta_3.\nonumber\enspace
\end{eqnarray}

Indeed, if
\[
c_3^{(4)}=(ijk),
\]
then by \Cref{odd},
\[
2w(c_3^{(4)})=w((ij))+w((jk))+w((ki)).
\]
Since these $2$-cycles do not involve the node corresponding to $\delta_1$, the quantity
\[
3\delta_1+2w(c_3^{(4)})
\]
is realized by three permutations of length three. Therefore,
\[
3\delta_1+2w(c_3^{(4)})\le 3\delta_3.
\]

\end{proof}

\section{Possible and impossible saturation patterns for symmetric matrices}\label{sec:5}

The inequalities established in the previous section admit a natural interpretation in terms of the Newton polygon associated with the tropical characteristic polynomial of a symmetric matrix. In particular, they imply that between any two consecutive saturated indices of the Newton polygon, there can be at most one non-saturated index.

To illustrate this phenomenon, we consider the truncated Newton polygon corresponding to the tropical characteristic polynomial of symmetric matrices in $\rmax^{n\times n}$, where $n\ge 4$. Since the inequalities involve only the first five coefficients, the truncated polygon is completely determined by $\delta_0,\delta_1,\delta_2,\delta_3,\delta_4$. The remaining coefficients $\delta_i$, for $i\ge 5$, are omitted from the figures.

We emphasize that this represents only a local portion of the Newton polygon; in general, higher coefficients may affect the global shape of the polygon and the set of saturated indices.
The only relevant feature of these diagrams is the pattern of saturated and non-saturated indices; the exact coordinates of the points play no role in our analysis. See \Cref{fig:truncated_newton}.

From these figures, we observe that symmetry imposes strong restrictions on the possible configurations. In particular, the patterns shown in \Cref{fig:impossible} cannot occur for symmetric matrices, which reflects the concavity-type constraints established in the previous section.


\begin{figure}[H]
\centering

\begin{minipage}{0.48\textwidth}
\centering
\begin{tikzpicture}[scale=0.8]

\draw[->] (0,0) -- (6,0) node[right] {$k$};
\draw[->] (0,0) -- (0,5) node[above] {$\delta_k$};

\fill (0,0)   circle (2pt) node[below] {$\delta_0$};
\fill (1,1.4) circle (2pt) node[below] {$\delta_1$};
\fill (2,2.5) circle (2pt) node[below] {$\delta_2$};
\fill (3,3.3) circle (2pt) node[below] {$\delta_3$};
\fill (4,3.8) circle (2pt) node[below] {$\delta_4$};

\draw[thick]
(0,0) -- (1,1.4) -- (2,2.5) -- (3,3.3) -- (4,3.8);

\node at (4.6,3.8) {$\cdots$};
\draw[dashed] (4,3.8) -- (5.2,3.8);

\node at (5.5,3.8) {$\delta_5,\delta_6,\ldots$};

\end{tikzpicture}
\small Case 1
\end{minipage}
\hfill
\begin{minipage}{0.48\textwidth}
\centering
\begin{tikzpicture}[scale=0.8]

\draw[->] (0,0) -- (6,0) node[right] {$k$};
\draw[->] (0,0) -- (0,5) node[above] {$\delta_k$};

\fill (0,0)   circle (2pt) node[below] {$\delta_0$};
\fill[red] (1,1) circle (2pt) node[below] {$\delta_1$};
\fill (2,2.5) circle (2pt) node[below] {$\delta_2$};
\fill (3,3.3) circle (2pt) node[below] {$\delta_3$};
\fill (4,3.8) circle (2pt) node[below] {$\delta_4$};

\draw[thick]
(0,0)  -- (2,2.5) -- (3,3.3) -- (4,3.8);

\node at (4.6,3.8) {$\cdots$};
\draw[dashed] (4,3.8) -- (5.2,3.8);

\node at (5.5,3.8) {$\delta_5,\delta_6,\ldots$};

\end{tikzpicture}

\small Case 2
\end{minipage}

\vspace{0.6cm}

\begin{minipage}{0.48\textwidth}
\centering
\begin{tikzpicture}[scale=0.8]

\draw[->] (0,0) -- (6,0) node[right] {$k$};
\draw[->] (0,0) -- (0,5) node[above] {$\delta_k$};

\fill (0,0)   circle (2pt) node[below] {$\delta_0$};
\fill (1,1.4) circle (2pt) node[below] {$\delta_1$};
\fill[red] (2,2) circle (2pt) node[below] {$\delta_2$};
\fill (3,3.3) circle (2pt) node[below] {$\delta_3$};
\fill (4,3.8) circle (2pt) node[below] {$\delta_4$};

\draw[thick]
(0,0) -- (1,1.4) -- (3,3.3) -- (4,3.8);

\node at (4.6,3.8) {$\cdots$};
\draw[dashed] (4,3.8) -- (5.2,3.8);

\node at (5.5,3.8) {$\delta_5,\delta_6,\ldots$};

\end{tikzpicture}
\small Case 3
\end{minipage}
\hfill
\begin{minipage}{0.48\textwidth}
\centering
\begin{tikzpicture}[scale=0.8]

\draw[->] (0,0) -- (6,0) node[right] {$k$};
\draw[->] (0,0) -- (0,5) node[above] {$\delta_k$};

\fill (0,0)   circle (2pt) node[below] {$\delta_0$};
\fill (1,1.4) circle (2pt) node[below] {$\delta_1$};
\fill (2,2.5) circle (2pt) node[below] {$\delta_2$};
\fill[red] (3,2.5) circle (2pt) node[below] {$\delta_3$};
\fill (4,3.8) circle (2pt) node[below] {$\delta_4$};

\draw[thick]
(0,0) -- (1,1.4) -- (2,2.5)  -- (4,3.8);

\node at (4.6,3.8) {$\cdots$};
\draw[dashed] (4,3.8) -- (5.2,3.8);

\node at (5.5,3.8) {$\delta_5,\delta_6,\ldots$};

\end{tikzpicture}
\small Case 4
\end{minipage}

\vspace{0.6cm}

\begin{minipage}{0.48\textwidth}
\centering
\begin{tikzpicture}[scale=0.8]

\draw[->] (0,0) -- (6,0) node[right] {$k$};
\draw[->] (0,0) -- (0,5) node[above] {$\delta_k$};

\fill (0,0)   circle (2pt) node[below] {$\delta_0$};
\fill[red] (1,1) circle (2pt) node[below] {$\delta_1$};
\fill (2,2.5) circle (2pt) node[below] {$\delta_2$};
\fill[red] (3,2.5) circle (2pt) node[below] {$\delta_3$};
\fill (4,3.8) circle (2pt) node[below] {$\delta_4$};

\draw[thick]
(0,0)  -- (2,2.5) -- (4,3.8);

\node at (4.6,3.8) {$\cdots$};
\draw[dashed] (4,3.8) -- (5.2,3.8);

\node at (5.5,3.8) {$\delta_5,\delta_6,\ldots$};

\end{tikzpicture}
\small Case 5
\end{minipage}
\caption{Possible configurations of the truncated Newton polygon determined by the coefficients $\delta_0,\ldots,\delta_4$. Black points are vertices of the Newton polygon (saturated coefficients), while red points lie strictly below the polygon (non-saturated coefficients). The dashed segment indicates that the Newton polygon continues beyond $\delta_4$.}
\label{fig:truncated_newton}

\end{figure}

\begin{figure}[H]
\centering

\begin{minipage}{0.48\textwidth}
\centering
\begin{tikzpicture}[scale=0.8]

\draw[->] (0,0) -- (6,0) node[right] {$k$};
\draw[->] (0,0) -- (0,5) node[above] {$\delta_k$};

\fill (0,0)   circle (2pt) node[below] {$\delta_0$};
\fill[red]  (1,0.7) circle (2pt) node[below] {$\delta_1$};
\fill[red]  (2,1.5) circle (2pt) node[below] {$\delta_2$};
\fill (3,3.3) circle (2pt) node[below] {$\delta_3$};
\fill (4,3.8) circle (2pt) node[below] {$\delta_4$};

\draw[thick]
(0,0)  -- (3,3.3) -- (4,3.8);

\node at (4.6,3.8) {$\cdots$};
\draw[dashed] (4,3.8) -- (5.2,3.8);

\node at (5.5,3.8) {$\delta_5,\delta_6,\ldots$};

\end{tikzpicture}
\small Case 1
\end{minipage}
\hfill
\begin{minipage}{0.48\textwidth}
\centering
\begin{tikzpicture}[scale=0.8]

\draw[->] (0,0) -- (6,0) node[right] {$k$};
\draw[->] (0,0) -- (0,5) node[above] {$\delta_k$};

\fill (0,0)   circle (2pt) node[below] {$\delta_0$};
\fill (1,1.4) circle (2pt) node[below] {$\delta_1$};
\fill[red] (2,2) circle (2pt) node[below] {$\delta_2$};
\fill[red] (3,2.5) circle (2pt) node[below] {$\delta_3$};
\fill (4,3.8) circle (2pt) node[below] {$\delta_4$};

\draw[thick]
(0,0) -- (1,1.4) -- (4,3.8);

\node at (4.6,3.8) {$\cdots$};
\draw[dashed] (4,3.8) -- (5.2,3.8);

\node at (5.5,3.8) {$\delta_5,\delta_6,\ldots$};

\end{tikzpicture}

\small Case 2
\end{minipage}

\caption{Configurations of the truncated Newton polygon determined by $\delta_0,\ldots,\delta_4$ that cannot occur. The red points indicate coefficients that would have to be non-saturated, leading to a violation of the concavity of the Newton polygon. The dashed segment indicates the continuation of the Newton polygon beyond $\delta_4$.}\label{fig:impossible}

\end{figure}

\begin{example}
Consider the sequence of coefficients
\[
(\delta_0,\delta_1,\ldots,\delta_9)
=(0,9,18,26,38,47,56,64,72,73).
\]
Then
\[
\delta_4+\delta_2=56>52=2\delta_3,
\]
and
\[
\delta_4+2\delta_1=56>54=3\delta_2.
\]
Hence, neither inequality of \Cref{prop} holds. Therefore, this sequence cannot be the coefficient sequence of the tropical characteristic polynomial of a symmetric matrix.

On the other hand, this sequence is realizable by a nonsymmetric matrix. Indeed, consider
\[
A=
\begin{bmatrix}
-4& 2& 3& 4&10&-1&-1& 3&-3\\
-3& 3& 4& 3& 8&-1& 1& 6& 0\\
-4& 1& 1&-6& 7&-1&-6&10&-6\\
-3&-4&-3& 9& 2& 1&-3&-2& 2\\
-6& 2&10& 6&-3& 5& 5& 1&-1\\
 4& 8&-5& 6& 0&-6&-1& 1&-4\\
-2& 8&-5&-5&-4& 8& 9& 6&-3\\
 3&-2&-4& 8&-6& 3&-4& 7& 9\\
 5&-3&-4& 9& 9& 8&10&-5& 5
\end{bmatrix}.
\]
The tropical algebraic eigenvalues of $A$ are
\[
1,\;8,\;9,\;9.5,
\]
with multiplicities
\[
1,\;2,\;2,\;4,
\]
respectively. The corresponding tropical characteristic polynomial has coefficients
\[
(\delta_0,\delta_1,\ldots,\delta_9)
=(0,9,18,26,38,47,56,64,72,73).
\]
Thus, the inequalities of \Cref{prop} provide a necessary condition for symmetric matrices, but they need not hold for nonsymmetric matrices.
\end{example}


       \section{Future work}\label{sec:6}

A natural direction for further research is to fully understand the concavity-type structure of the sequence $(\delta_m)$ associated with symmetric tropical matrices. Our results for the first coefficients of the tropical characteristic polynomial of symmetric matrices suggest a strong regularity phenomenon in the behavior of these coefficients, which appears to govern the geometry of the corresponding Newton polygons.
In particular, we expect that this pattern extends to all coefficients, leading to the following conjectural characterization.

\begin{conjecture}\label{conj_future}
For $3 \le m \le n$, at least one of the following inequalities holds:
\begin{enumerate}
\item $\delta_m - \delta_{m-1} \le \delta_{m-1} - \delta_{m-2}$, or
\item $\delta_m - \delta_{m-2} \le 2(\delta_{m-2} - \delta_{m-3})$.
\end{enumerate}
\end{conjecture}

If such concavity properties hold in general, they would impose strong constraints on the shape of the Newton polygon of the tropical characteristic polynomial of symmetric matrices. This would in turn provide a more precise description of tropical eigenvalues.
Moreover, it would open the way to a deeper understanding of eigenvalues of symmetric matrices over Puiseux series, in particular their asymptotic behavior, by linking algebraic structure with the geometry of the associated valuation data.







\bibliographystyle{alpha} 
\bibliography{refs} 
\end{document}